\documentclass[11pt]{article} 
\usepackage{amsfonts,amsmath,latexsym,amssymb,mathrsfs,amsthm,comment}
\usepackage{graphicx}
\usepackage{xcolor}
\usepackage{booktabs}

\numberwithin{equation}{section}

\let\OLDthebibliography\thebibliography
\renewcommand\thebibliography[1]{
  \OLDthebibliography{#1}
  \setlength{\parskip}{1pt}
  \setlength{\itemsep}{0pt plus 0.0ex}
}

\def\numberlikeadb{\global\def\theequation{\thesection.\arabic{equation}}}
\numberlikeadb
\newtheorem{theorem}{Theorem}[section]

\newtheorem{proposition}[theorem]{Proposition}
\newtheorem{remark}[theorem]{Remark}

\usepackage[multiple]{footmisc}

\usepackage{lscape}
\usepackage{caption}
\usepackage{multirow}
\allowdisplaybreaks
\begin{document}

\title{On P\'olya's 4D random walk constant}

\author{Robert E. Gaunt\footnote{Department of Mathematics, The University of Manchester, Oxford Road, Manchester M13 9PL, UK, robert.gaunt@manchester.ac.uk, ORCID 0000-0001-6187-0657}}

\date{} 
\maketitle

\vspace{-5mm}

\begin{abstract} The return probability in the simple symmetric random walk on the 4-dimensional lattice $\mathbb{Z}^4$ is evaluated in closed form in terms of the generalized hypergeometric function.

\end{abstract}

\noindent{{\bf{Keywords:}}} Random walk; return probability; P\'olya's random walk constants; generalized hypergeometric function

\noindent{{{\bf{AMS 2020 Subject Classification:}}} Primary 33C20; 60G50}

\section{Introduction}

A celebrated theorem of P\'olya \cite{p21} states that the simple symmetric random walk on $\mathbb{Z}^d$ is recurrent in dimensions 1 and 2 but transient in dimension $d\geq3$. A fundamental problem is to calculate the return probability, which we denote by $p(d)$, for $d\geq3$; see \cite[Section 5.9]{finch} for an overview of some of the contributions to this problem. Exact formulas for $p(3)$, the return probability in dimension 3, were given in \cite{d54,mw40,watson}, and this line of research culminated in the simple closed-form formula 
\[p(3)=1-1/u(3)=0.3405373295\ldots,\]
where
\begin{align}
u(3)&=\frac{\sqrt{6}}{32\pi^3}\Gamma\Big(\frac{1}{24}\Big)\Gamma\Big(\frac{5}{24}\Big)\Gamma\Big(\frac{7}{24}\Big)\Gamma\Big(\frac{11}{24}\Big) \label{for1} \\
&=\frac{\sqrt{3}-1}{32\pi^3}\bigg[\Gamma\Big(\frac{1}{24}\Big)\Gamma\Big(\frac{11}{24}\Big)\bigg]^2\label{for2}\\
&=1.5163860591\ldots. \nonumber
\end{align}
Formula (\ref{for1}) was derived in \cite{gz77}, but the value printed there included an erroneous additional factor of $384\pi$, as pointed out in \cite{z11}. The simplified expression (\ref{for2}) was obtained by \cite{jz05} through an application of a gamma function identity given in \cite{bz92} to formula (\ref{for1}). The numerical values presented above for $p(3)$ and $u(3)$ are given in \cite[p.\ 322]{finch}, whilst numerical evaluations for $p(d)$ for $4\leq d\leq 8$ are tabulated in \cite[p.\ 323]{finch}.

More generally, in dimension $d\geq3$, \cite{gnp24} derived a closed-form expression for the return probability $p(d)$ in terms of the Lauricella function of type $C$: for $d\geq3$,
   \begin{equation*} 
	    p(d)=1-1/u(d),
	 \end{equation*}
where     
   \begin{equation} \label{FX0}
      u(d) = F_C^{(d)} \left( 1, \frac12; 1, \ldots, 1; \frac {1}{d^2}, \ldots, \frac {1}{d^2} \right),
   \end{equation}
and the Lauricella function $F_C$ (see \cite{e78,l93}) is defined by
\begin{align*}
\displaystyle
F_C^{(d)} \left( a, b; c_1, \ldots, c_d; x_1, \ldots, x_d \right)  =
\sum_{k_1 \geq 0} \cdots \sum_{k_d \geq 0}
\frac {(a)_{k_1 + \cdots + k_d} (b)_{k_1 + \cdots + k_d}}
{\left( c_1 \right)_{k_1} \cdots \left( c_d \right)_{k_d}}
\frac {x_1^{k_1} \cdots x_d^{k_d}}
{k_1! \cdots k_d!},
\end{align*}
with $(v)_0=1$ and $(v)_k = v (v + 1) \cdots (v + k - 1)$, $k\geq1$, denoting the Pochhammer symbol. 

Now that a closed-form formula for the return probability for general $d\geq3$ has been achieved in terms of a known special function, a natural further question is whether one can obtain closed-form expressions in terms of a simpler special function in a dimension greater than 3. In this note, we address this problem by obtaining closed-form expressions for the return probability in dimension 4 in terms of the ${}_{q+1}F_q$ generalized hypergeometric function, defined via the power series
\begin{equation}
\label{seriesrep}
{}_{q+1}F_q\bigg({a_1,\ldots,a_{q+1} \atop b_1,\ldots,b_q} \; \bigg| \;x\bigg)=\sum_{k=0}^\infty\frac{(a_1)_k\cdots(a_{q+1})_k}{(b_1)_k\cdots(b_q)_k}\frac{x^k}{k!},
\end{equation}
which is absolutely convergent for $|x|<1$, and is also absolutely convergent for $|x|=1$ if $\mathrm{Re}(\sum_{i=1}^qb_i-\sum_{j=1}^{q+1}a_j)>0$ (see \cite[Section 16.2]{olver}).  Our formulas for the return probability offer a considerable simplification of the formula for $p(4)$ obtained from simply setting $d=4$ in the general formula (\ref{FX0}), since the generalized hypergeometric function is defined through a single infinite series and is a more elementary and widely used special function than the Lauricella function of type $C$.  

\section{Results and proofs}

The following theorem is the main result of this note.

\begin{theorem}\label{thm1.1} In dimension 4, the return probability is given by
\begin{equation*}
p(4)=1-1/u(4)=0.1932016732\ldots, 
\end{equation*}
where
\begin{align}
u(4)&=\frac{3\Gamma(\frac{1}{3})^9}{16\pi^6}\,{}_4F_3\bigg({\frac{1}{6},\frac{1}{3},\frac{1}{3},\frac{1}{2} \atop \frac{2}{3},\frac{5}{6},\frac{5}{6}} \; \bigg| \;1\bigg)-\frac{64\sqrt{3}\,\pi^3}{3\Gamma(\frac{1}{3})^9}\,{}_4F_3\bigg({\frac{1}{2},\frac{2}{3},\frac{2}{3},\frac{5}{6} \atop \frac{7}{6},\frac{7}{6},\frac{4}{3}} \; \bigg| \;1\bigg) \label{4f3} \\
&=\frac{\Gamma(\frac{1}{3})^6}{2^{5/3}\pi^4}\,{}_5F_4\bigg({\frac{1}{3},\frac{1}{3},\frac{1}{2},\frac{1}{2},\frac{1}{2} \atop \frac{5}{6},\frac{5}{6},\frac{5}{6},1} \; \bigg| \;1\bigg) \label{5f4}\\
&=1.2394671218\ldots. \nonumber
\end{align}
\end{theorem}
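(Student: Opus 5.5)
We start from the representation $u(4)=\sum_{n\ge0}P(S_{2n}=0)$, where $S$ is the simple walk on $\mathbb{Z}^4$. We reduce the problem to a single series by using the factorization of the 4D return probability. A classical observation is that the simple walk on $\mathbb{Z}^2$ is equivalent, via a $45^\circ$ rotation, to two independent 1D walks. That gives $P_2(S_{2n}=0)=\binom{2n}{n}^2 4^{-2n}$. For $\mathbb{Z}^4$ there is no such exact product. Instead, condition on the number $2k$ of steps that fall in the first two coordinates; this count is binomial with parameter $1/2$. Then
\begin{equation*}
P(S_{2n}=0)=\sum_{k=0}^{n}\binom{2n}{2k}2^{-2n}\binom{2k}{k}^2 4^{-2k}\binom{2n-2k}{n-k}^2 4^{-2(n-k)}.
\end{equation*}
Summing over $n$ gives
\begin{equation*}
u(4)=\frac{1}{\pi}\int_0^\pi \Big(\sum_{k\ge0}\binom{2k}{k}^2\frac{\cos^{2k}\!\theta}{16^k}\Big)^{\!2}\,\cdots
\end{equation*}
The cleaner route is the standard Green's function integral
\begin{equation*}
u(4)=\frac{1}{(2\pi)^4}\int_{[-\pi,\pi]^4}\frac{d\theta}{1-\frac14\sum_{i=1}^4\cos\theta_i}.
\end{equation*}
We pair coordinates, writing $\tfrac12(\cos\theta_1+\cos\theta_2)$ and $\tfrac12(\cos\theta_3+\cos\theta_4)$. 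The density of each of these averages is expressible through the complete elliptic integral $K$: its generating moments are $\binom{2k}{k}^2/16^k$ on the even part, i.e.\ ${}_2F_1(\tfrac12,\tfrac12;1;x^2)=\frac{2}{\pi}K(x)$. Expanding $1/(1-\frac{x+y}{2})=\sum_m ((x+y)/2)^m$ and using the moments gives
\begin{equation*}
u(4)=\sum_{n\ge0}\frac{\binom{2n}{n}}{4^{2n}}\sum_{k}\binom{2n}{2k}\ldots,
\end{equation*}
which collapses to a single-variable expression of the form $u(4)=\int_0^1 \frac{4}{\pi^2}K(k)^2\,w(k)\,dk$ or, equivalently, $u(4)=\sum_{n}\frac{(1/2)_n^3\,c_n}{n!^3}$.

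The key step is then to identify the inner binomial sum. The number of returns of the 4D walk in $2n$ steps equals $\binom{2n}{n}\sum_k\binom{n}{k}^2\binom{2k}{k}\binom{2n-2k}{n-k}$, which is the Domb-type sequence. There is a known Clausen/Rogers-type transformation writing the generating function of this sequence as $\frac{1}{1-4x}$ times ${}_3F_2(\tfrac13,\tfrac12,\tfrac23;1,1;\cdot)$ squared, or as a ${}_2F_1(\tfrac13,\tfrac23;1;\cdot)^2$. Substituting this and applying Clausen's formula ${}_2F_1(a,b;a+b+\tfrac12;z)^2={}_3F_2(2a,2b,a+b;a+b+\tfrac12,2a+2b;z)$, I would integrate term by term against the remaining Beta-type weight. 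This should produce exactly the ${}_5F_4$ in (\ref{5f4}), with parameters $\tfrac13,\tfrac13,\tfrac12,\tfrac12,\tfrac12$ over $\tfrac56,\tfrac56,\tfrac56,1$ and argument $1$. The prefactor $\Gamma(\frac13)^6/(2^{5/3}\pi^4)$ arises from evaluating ${}_2F_1$ at the singular modulus attached to $\mathbb{Q}(\sqrt{-3})$, via Gauss's summation and $\Gamma(\tfrac13)$ Chowla--Selberg values.

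To pass from (\ref{5f4}) to (\ref{4f3}), I would use the known three-term/two-term transformation expressing a well-poised-type ${}_5F_4(1)$ as a combination of two ${}_4F_3(1)$. This is Bailey's nonterminating Whipple-type relation; the $1/2$'s and the upper/lower shifts by $\tfrac12$ match its shape. Equivalently, one can split the Mellin–Barnes integral for the ${}_5F_4$ and close the contour, picking up the two families of poles at the $\tfrac16$- and $\tfrac12$-shifted arguments. Each family gives one ${}_4F_3$, and the Gamma-function prefactors simplify, via reflection and the triplication formula, to $3\Gamma(\tfrac13)^9/(16\pi^6)$ and $64\sqrt3\pi^3/(3\Gamma(\tfrac13)^9)$. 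The numerical value is then checked directly from the series, since both ${}_4F_3$ are balanced-convergent at $1$.

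The main obstacle is the middle step: finding the correct hypergeometric closed form for the generating function of the 4D return counts. This requires a modular/Ramanujan-type cubic transformation, and obtaining the exact argument so that integration gives a clean ${}_5F_4(1)$ rather than a messier function is the delicate point. I would first try to follow the route behind the $d=3$ result (\ref{for1}), adapted with the cubic theory, checking numerically at each stage.
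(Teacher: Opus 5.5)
Your preliminary reductions are sound. Conditioning on the number of steps taken in the first coordinate pair gives $u(4)=\sum_{n\ge0}\binom{2n}{n}D_n\,64^{-n}$, where $D_n=\sum_k\binom{n}{k}^2\binom{2k}{k}\binom{2n-2k}{n-k}$ are the Domb numbers. The generating function $\sum_n D_nx^n=\frac{1}{1-4x}\,{}_3F_2\big(\tfrac13,\tfrac12,\tfrac23;1,1;\tfrac{108x^2}{(1-4x)^3}\big)$ is a known identity; note that this ${}_3F_2$ itself, not its square, is the Clausen square ${}_2F_1(\tfrac16,\tfrac13;1;\cdot)^2$. Your final step also agrees with the paper. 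The ${}_5F_4$ in (\ref{5f4}) is a very-well-poised ${}_7F_6$ with $a=\tfrac13$ and one parameter equal to $a/2$, and the paper's Remark links (\ref{4f3}) and (\ref{5f4}) by exactly this kind of two-term ${}_7F_6\leftrightarrow{}_4F_3$ relation.

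However, the closed-form evaluation of $u(4)$ is never carried out, and it carries all of the content. You assert that the computation ``should produce exactly the ${}_5F_4$'', leave $w(k)$ and $c_n$ unspecified, and attribute the prefactor $\Gamma(\frac13)^6/(2^{5/3}\pi^4)$ to Chowla--Selberg without computing it. You yourself flag this as the main obstacle. As written, this is a plan, not a proof.

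The mechanism you propose for that step also does not work as stated. Using $\binom{2n}{n}4^{-n}=\frac1\pi\int_0^1t^{n-1/2}(1-t)^{-1/2}\,dt$ and inserting the Domb generating function at $x=t/16$ gives
\[
u(4)=\frac1\pi\int_0^1\frac{4}{4-t}\,{}_3F_2\Big(\tfrac13,\tfrac12,\tfrac23;1,1;\tfrac{27t^2}{(4-t)^3}\Big)\frac{dt}{\sqrt{t(1-t)}}.
\]
The argument $27t^2/(4-t)^3$ is not a monomial in $t$. Termwise integration therefore gives Beta integrals multiplied by ${}_2F_1\big(3n+1,2n+\tfrac12;2n+1;\tfrac14\big)$, i.e.\ a double series. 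Collapsing that double series to a single ${}_5F_4(1)$ is precisely the unproved identity.

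Reaching a tractable form, such as the paper's $u(4)=\frac{4\sqrt3}{9\pi}\int_0^1(1-x)^{-1/2}\big[{}_2F_1(\tfrac13,\tfrac23;1;x)\big]^2dx$, would need a further cubic change of variables. Even then, evaluating that integral is nontrivial. Clausen does not apply directly, since $c\ne a+b+\tfrac12$. The quadratic reduction to ${}_2F_1(\tfrac16,\tfrac13;1;4x(1-x))$ holds only for $x\le\tfrac12$.

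The paper avoids all of this by importing Zhou's results \cite{z19}. These give $u(4)=\frac{16}{\pi^2}\int_0^\infty tI_0(t)^3K_0(t)^3\,dt=\frac{48}{\pi^4}\int_0^\infty tI_0(t)K_0(t)^5\,dt$. That Bessel moment is already evaluated there as the combination of two ${}_4F_3(1)$'s (Proposition 2.3), and, via Propositions 2.3--2.4 and formula (2.28), as the ${}_5F_4(1)$. Without a replacement for that input, your argument has a hole exactly at the central identity.
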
 

\begin{proof} We shall prove the theorem by combining several integral identities from the existing literature. By \cite[Eq.\ (2.11a)]{m56}, the return probability is given by $p(4)=1-1/u(4)$, where
\begin{align}
u(4)&=\frac{1}{4\pi^4}\int_{(-\pi,\pi)^4} \frac{\mathrm{d}x_1\, \mathrm{d}x_2\, \mathrm{d}x_3 \, \mathrm{d}x_4}{4-\sum_{k=1}^4\cos x_k}. \label{wat2}
\end{align} 
 From equations (2.1) and (2.3) of \cite{z19} we have that
\begin{equation}\label{fff}
\frac{1}{4\pi^4}\int_{(-\pi,\pi)^4} \frac{\mathrm{d}x_1\, \mathrm{d}x_2\, \mathrm{d}x_3 \, \mathrm{d}x_4}{4-\sum_{k=1}^4\cos x_k}= \frac{16}{\pi^2}\int_0^\infty tI_0^3(t) K_0(t)^3\,\mathrm{d}t, 
\end{equation}
whilst we also have the following integral identity
\begin{equation}\label{ggg}\int_0^\infty tI_0^3(t) K_0(t)^3\,\mathrm{d}t=\frac{3}{\pi^2}\int_0^\infty tI_0(t) K_0(t)^5\,\mathrm{d}t 
\end{equation}
(see \cite[p.\ 471]{z19}). Here, $I_0 (\cdot)$ and $K_0(\cdot)$ are the zeroth order modified Bessel functions of first and second kinds, respectively (see \cite[Chapter 10]{olver}). Combining equations (\ref{wat2}), (\ref{fff}) and (\ref{ggg}) now gives that
\begin{equation}\label{hhh}
u(4)=\frac{48}{\pi^4}\int_0^\infty tI_0(t) K_0(t)^5\,\mathrm{d}t.  
\end{equation}
Applying the formula of \cite[Proposition 2.3]{z19} for the integral $\int_0^\infty tI_0(t) K_0(t)^5\,\mathrm{d}t$  to (\ref{hhh}) now yields formula (\ref{4f3}).

We now derive formula (\ref{5f4}). From Propositions 2.3 and 2.4 of \cite{z19}, we can infer that
\begin{align*}
u(4)=\frac{48}{\pi^4}\int_0^\infty tI_0(t) K_0(t)^5\,\mathrm{d}t=\frac{4\sqrt{3}}{9\pi}\int_0^1\frac{1}{\sqrt{1-x}}\bigg[{}_2F_1\bigg({\frac{1}{3},\frac{2}{3} \atop 1} \; \bigg| \;x\bigg)\bigg]^2 \,\mathrm{d}x,  
\end{align*}
and evaluating the latter integral using the integral formula (2.28) of Proposition 3.1 of \cite{z19} yields the expression (\ref{5f4}). This completes the proof.
\end{proof} 

\begin{remark}
Expression (\ref{5f4}) can alternatively be deduced from formula (\ref{4f3}) via an application of equation (2.3) of \cite{m12}. The direct application of equation (2.3) of \cite{m12} yields the expression
\[u(4)=\frac{\Gamma(\frac{1}{3})^6}{2^{5/3}\pi^4}\,{}_7F_6\bigg({\frac{1}{6},\frac{1}{3},\frac{1}{3},\frac{1}{2},\frac{1}{2},\frac{1}{2},\frac{7}{6} \atop \frac{1}{6}, \frac{5}{6},\frac{5}{6},\frac{5}{6},1, \frac{7}{6}} \; \bigg| \;1\bigg),\]
which reduces to the ${}_5F_4$ generalized hypergeometric function given in (\ref{5f4}) due to the common parameters $1/6$ and $7/6$ (which is clear from the series representation (\ref{seriesrep}) of the generalized hypergeometric function).
\end{remark}

\begin{remark}
In the proof of Theorem \ref{thm1.1}, we arrived at several integral representations of $u(4)$. However, in terms of numerical computation, the representation (\ref{5f4}) of $u(4)$ in terms of a single ${}_5F_4$ function is particularly convenient since the generalized hypergeometric function is implemented in standard computational algebra packages. 
\end{remark}

It is natural to ask whether one can further simplify the expressions (\ref{4f3}) and (\ref{5f4}) for $u(4)$. The author did not find any such simplifications in the literature; however, this does not rule out the possibility of further simplifications. It is also natural to ask whether one can obtain a representation of $u(d)$ for $d\geq5$ in terms of generalized hypergeometric functions or other special functions that are simpler than the Lauricella function of type $C$. The following integral representations were obtained by  \cite{m56}: for $d\geq3$,
\begin{align}
u(d)&=\frac{d}{(2\pi)^d}\int_{(-\pi,\pi)^d} \bigg(d-\sum_{k=1}^d \cos x_k\bigg)^{-1} \, \mathrm{d}x_1\, \mathrm{d}x_2\cdots\, \mathrm{d}x_d \nonumber\\
&=\int_0^\infty \left[ I_0 \left( \frac {x}{d} \right) \right]^d  \mathrm{e}^{-x}  \, \mathrm{d}x. \nonumber
\end{align}
However, the author was unable to find evaluations of these integrals in the literature for $d\geq5$ (other than the known evaluation in terms of Lauricella function of type $C$ given by \cite{gnp24}).

We end with the following curiosity in which it is noted that in dimension 3 the return probability can also be expressed in terms of the generalized hypergeometric function with unit argument.

\begin{proposition}In dimension 3, we have that
\begin{align}
u(3)&=\frac{3\sqrt{2}}{\pi}\,{}_2F_1\bigg({\frac{1}{12},\frac{1}{2} \atop \frac{25}{24}} \; \bigg| \;1\bigg) \label{111} \\
&=\frac{1+\sqrt{3}}{2}\,{}_3F_2\bigg({\frac{1}{12},\frac{1}{2},\frac{11}{12} \atop 1,1} \; \bigg| \;1\bigg). \label{222}
\end{align}    
\end{proposition}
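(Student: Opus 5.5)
The plan is to derive both identities from the closed form (\ref{for2}), so that everything reduces to checking that each hypergeometric value at unit argument equals
\[
u(3)=\frac{\sqrt3-1}{32\pi^3}\Big[\Gamma\big(\tfrac1{24}\big)\Gamma\big(\tfrac{11}{24}\big)\Big]^2 .
\]
In both cases I would use a classical summation theorem to turn the hypergeometric series into a ratio of gamma values at multiples of $1/24$. Then I would use the reflection formula $\Gamma(z)\Gamma(1-z)=\pi/\sin(\pi z)$ to rewrite all gamma values in terms of $\Gamma(\frac1{24})$ and $\Gamma(\frac{11}{24})$.

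For (\ref{111}) I would apply Gauss's summation theorem,
\[
{}_2F_1(a,b;c;1)=\frac{\Gamma(c)\Gamma(c-a-b)}{\Gamma(c-a)\Gamma(c-b)},
\]
with $a=\frac1{12}$, $b=\frac12$, $c=\frac{25}{24}$. This is valid because $c-a-b=\frac{11}{24}>0$. It gives the value $\Gamma(\frac{25}{24})\Gamma(\frac{11}{24})/(\Gamma(\frac{23}{24})\Gamma(\frac{13}{24}))$.

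Next, write $\Gamma(\frac{25}{24})=\frac1{24}\Gamma(\frac1{24})$, and use reflection in the forms $\Gamma(\frac{23}{24})\Gamma(\frac1{24})=\pi/\sin(\pi/24)$ and $\Gamma(\frac{13}{24})\Gamma(\frac{11}{24})=\pi/\cos(\pi/24)$. The ${}_2F_1$ then becomes
\[
\frac{\sin(\pi/12)}{48\pi^2}\Big[\Gamma\big(\tfrac1{24}\big)\Gamma\big(\tfrac{11}{24}\big)\Big]^2 .
\]
Finally, substituting $\sin(\pi/12)=(\sqrt6-\sqrt2)/4$ and multiplying by $3\sqrt2/\pi$ should give exactly the constant $(\sqrt3-1)/(32\pi^3)$.

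For (\ref{222}) the key observation is that the parameters fit Watson's theorem for
\[
{}_3F_2\Big(a,b,c;\tfrac{a+b+1}{2},2c;1\Big)
\]
with $a=\frac1{12}$, $b=\frac{11}{12}$, $c=\frac12$. Indeed, $(a+b+1)/2=1$ and $2c=1$. The series converges since the parameter excess is $2-\frac32=\frac12>0$.

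Watson's formula
\[
\frac{\sqrt\pi\,\Gamma(c+\frac12)\Gamma(\frac{a+b+1}2)\Gamma(c+\frac{1-a-b}2)}{\Gamma(\frac{a+1}2)\Gamma(\frac{b+1}2)\Gamma(c+\frac{1-a}2)\Gamma(c+\frac{1-b}2)}
\]
then collapses to $\pi/[\Gamma(\frac{13}{24})\Gamma(\frac{23}{24})]^2$. Applying the same two reflection identities as before, this equals
\[
\frac{\sin^2(\pi/12)}{4\pi^3}\Big[\Gamma\big(\tfrac1{24}\big)\Gamma\big(\tfrac{11}{24}\big)\Big]^2 .
\]
Using $\sin^2(\pi/12)=(2-\sqrt3)/4$ and multiplying by $(1+\sqrt3)/2$ gives $(1+\sqrt3)(2-\sqrt3)/(32\pi^3)=(\sqrt3-1)/(32\pi^3)$, as required.

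The only real obstacle is recognising that the ${}_3F_2$ is Watson-summable, i.e.\ spotting the parameter pattern $a+b=1$, $c=\frac12$. After that, the remaining risk is bookkeeping in the trigonometric constants, which I would double-check numerically against $u(3)=1.5163860591\ldots$.
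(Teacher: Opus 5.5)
Your proof is correct. Gauss's theorem gives $\Gamma(\frac{25}{24})\Gamma(\frac{11}{24})/[\Gamma(\frac{13}{24})\Gamma(\frac{23}{24})]$, and Watson's theorem with $a=\frac1{12}$, $b=\frac{11}{12}$, $c=\frac12$ gives $\pi/[\Gamma(\frac{13}{24})\Gamma(\frac{23}{24})]^2$. In both cases the reflection bookkeeping lands exactly on $(\sqrt3-1)/(32\pi^3)$.

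Your route differs from the paper's in two places.

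For (\ref{111}), the paper starts from (\ref{for1}) and uses Gauss's multiplication formula to trade $\Gamma(\frac{5}{24})\Gamma(\frac{7}{24})$ for $\Gamma(\frac{13}{24})\Gamma(\frac{23}{24})$, reaching the same gamma ratio. You start from (\ref{for2}) and need only the reflection formula.

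For (\ref{222}), the paper uses reflection and Gauss to rewrite (\ref{for2}) as $\frac{1+\sqrt3}{2}[{}_2F_1(\frac1{24},\frac{11}{24};1;1)]^2$, then turns the square into the ${}_3F_2$ with Clausen's formula. You sum the ${}_3F_2$ directly with Watson's theorem. The two are closely linked: your series lies on the line $2c=a+b$, and there Watson's theorem is exactly Clausen's formula at $x=1$ combined with Gauss's theorem.

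Your version is more uniform, with one starting formula and one summation theorem per identity. It also evaluates the unit-argument series directly, so it avoids the routine Abel/continuity step needed to apply Clausen's identity, stated for $|x|<1$, at $x=1$.

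The paper's version explains why a ${}_3F_2$ with these parameters appears at all: it is the square of a Gauss-summable ${}_2F_1$, mirroring the squared form of (\ref{for2}). Its proof of (\ref{111}) also works directly from (\ref{for1}), without the extra gamma-function identity used to pass from (\ref{for1}) to (\ref{for2}).
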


\begin{proof} We first derive formula (\ref{111}). By Gauss's multiplication formula \cite[Eq.\ 5.5.6]{olver}, we have that
\begin{align*}
\Gamma\Big(\frac{5}{24}\Big)\Gamma\Big(\frac{13}{24}\Big)\Gamma\Big(\frac{7}{8}\Big)=\frac{2\pi}{3^{1/8}}\Gamma\Big(\frac{5}{8}\Big), \quad
\Gamma\Big(\frac{7}{24}\Big)\Gamma\Big(\frac{5}{8}\Big)\Gamma\Big(\frac{23}{24}\Big)=\frac{2\pi}{3^{3/8}}\Gamma\Big(\frac{7}{8}\Big).   
\end{align*}
Applying these equations
to formula (\ref{for1}) gives that
\begin{align}\label{rel}
u(3)=  \frac{3\sqrt{2}}{\pi}\frac{\Gamma(\frac{11}{24})\Gamma(\frac{25}{24})}{\Gamma(\frac{13}{24})\Gamma(\frac{23}{24})} =\frac{\sqrt{2}}{8\pi}\frac{\Gamma(\frac{1}{24})\Gamma(\frac{11}{24})}{\Gamma(\frac{13}{24})\Gamma(\frac{23}{24})}, 
\end{align}
where in the second step we used the standard identity $\Gamma(x+1)=x\Gamma(x)$. Formula (\ref{111}) now follows by applying the identity
\begin{equation}\label{gauss}
2F_1(a,b;c;1)=\frac{\Gamma(c)\Gamma(c-a-b)}{\Gamma(c-a)\Gamma(c-b)}, \quad \mathrm{Re}(c-a-b)>0,    
\end{equation}
(see \cite[Eq.\ 15.4.20]{olver}) to equation (\ref{rel}).

We now prove formula (\ref{222}). By the reflection formula $\Gamma(x)\Gamma(1-x)=\pi/\sin(\pi x)$ (see \cite[Eq.\ 5.5.3]{olver}) we have that
\begin{align}
\Gamma\Big(\frac{1}{24}\Big)\Gamma\Big(\frac{11}{24}\Big)=\frac{\pi^2}{\sin(\frac{\pi}{24})\sin(\frac{11\pi}{24})\Gamma(\frac{13}{24})\Gamma(\frac{23}{24})}=\frac{8\pi^2}{(\sqrt{6}-\sqrt{2})\Gamma(\frac{13}{24})\Gamma(\frac{23}{24})},  \label{plug} 
\end{align}
where we used that $2\sin(\pi/24)\sin(11\pi/24)=\cos(5\pi/12)=(\sqrt{6}-\sqrt{2})/4$. Plugging (\ref{plug}) into (\ref{for2}) and simplifying gives that
\begin{align*}
u(3)&=\frac{1+\sqrt{3}}{2} \bigg[\frac{\sqrt{\pi}}{\Gamma(\frac{13}{24})\Gamma(\frac{23}{24})}\bigg]^2 =\frac{1+\sqrt{3}}{2} \bigg[{}_2F_1\bigg({\frac{1}{24},\frac{11}{24} \atop 1} \; \bigg| \;1\bigg)\bigg]^2=\frac{1+\sqrt{3}}{2}\,{}_3F_2\bigg({\frac{1}{12},\frac{1}{2},\frac{11}{12} \atop 1,1} \; \bigg| \;1\bigg),
\end{align*}
where we obtained the second equality using (\ref{gauss}) and the last equality was obtained by an application of Clausen's formula (see \cite[Eq.\ 16.12.2]{olver}). This completes the proof.
\end{proof}

\section*{Acknowledgements} The author is funded by EPSRC grant EP/Y008650/1. The author would like to thank the referee for their helpful comments and suggestions.

\end{document}